\newcommand{\supp}{\mathrm{supp}}
\newcommand{\pr}{\mathrm{pr}}
\newcommand{\id}{\mathrm{id}}
\newcommand{\M}{\mathcal M}
\newcommand{\K}{\mathcal K}
\newcommand{\N}{\mathbb N}
\newcommand{\R}{\mathbb R}
\newcommand{\Cl}{\mathrm{Cl}}
\newtheorem{theorem}{Theorem}[section]
\newtheorem{problem}[theorem]{Problem}
\newtheorem{lemma}[theorem]{Lemma}
\newtheorem{proposition}[theorem]{Proposition}
\theoremstyle{definition}
\newtheorem{df}[theorem]{Definition}
\title{Bundle of idempotent measures}
\author[T.~Radul]{Taras Radul}
\address[T.~Radul]{Kazimierz Wielki University, Bydgoszcz (Poland) and Ivan Franko National University of Lviv (Ukraine)}
\email{tarasradul@yahoo.co.uk}
\subjclass[2010]{55R10; 52A01; 54C10; 28A33}
\keywords{Hilbert cube; idempotent (Maslov) measure; idempotent barycenter map}
\begin{document}
\begin{abstract} We investigate when the idempotent barycenter map restricted to the points with no-trivial fibers  is a trivial bundle with the fiber Hilbert cube.
\end{abstract}

\maketitle

\section{Introduction}The notion of idempotent (Maslov) measure finds important applications in different
parts of mathematics, mathematical physics and economics (see the survey article
\cite{Litv} and the bibliography therein). Topological and categorical properties of the functor of idempotent measures were studied in \cite{Zar}. Although idempotent measures are not additive and corresponding functionals are not linear, there are some parallels between topological properties of spaces of probability measures and spaces of idempotent measures (see for example \cite{Zar} and \cite{Radul}) which are based on existence of  natural equiconnectedness structure on both constructions.

However, some differences appear when the problem of the openness of the barycenter map was studying.
The problem of the openness of the barycenter map of probability measures was investigated  in \cite{Fed}, \cite{Fed1},  \cite{Eif}, \cite{OBr} and \cite{Pap}. In particular, it was proved in \cite{OBr} that the barycenter map for a compact convex set  in a locally convex space is open iff the map $(x, y)\mapsto 1/2 (x + y)$ is open and it is proved in \cite{Fed1} that the product of barycentrically open compact convex sets (i.e. compact convex sets for which the barycentre map  is open) is again barycentrically open. It was shown in  \cite{Radul1} that analogous statements for idempotent measures are false.

The problem of the softness of the barycenter map of probability measures was investigated  in \cite{Fed1}, \cite{Radul2} та \cite{Radul3}. In particular, it was proved in \cite{Fed1} that the product of $\omega_1$ barycentrically open metrizable convex compacta is barycentrically soft. Instead, it was proved in \cite{Radul4} that for the Tychonov cube of weight $\omega_1$ the idempotent barycenter map is not  soft.

It was proved in \cite{Fed} that the barycenter map of probability measures restricted to the points with no-trivial fibers  is a trivial bundle with the fiber Hilbert cube, provided it is open. However Zarichnyi remarked that this result can not be generalised for compacta with higher weights (see  \cite{Fed2} for more information).

We consider in this paper an analogous problem for idempotent measures.

\section{Idempotent measures: preliminaries}

By compactum we mean a compact Hausdorff space. We shall denote by $C(X)$ the
Banach space of continuous functions on $X$ endowed with the sup-norm. For any $c\in\R$ we shall denote  by $c_X$ the
constant function on $X$ taking the value $c$.

Let $\R_{\max}=\R\cup\{-\infty\}$ be the metric space endowed with the metric $\varrho$ defined by $\varrho(x, y) = |e^x-e^y|$.
Following the notation of idempotent mathematics (see e.g., \cite{MS}) we use the
notations $\oplus$ and $\odot$ in $\R$ as alternatives for $\max$ and $+$ respectively. The convention $-\infty\odot x=-\infty$ allows us to extend $\odot$ and $\oplus$  over $\R_{\max}$.

Max-plus convex sets were introduced in \cite{Z}.
Let $\tau$ be a cardinal number. Given $x, y \in \R^\tau$ and $\lambda\in\R_{\max}$, we denote by $y\oplus x$ the coordinatewise
maximum of x and y and by $\lambda\odot x$ the vector obtained from $x$ by adding $\lambda$ to each of its coordinates. A subset $A$ in $\R^\tau$ is said to be  max-plus convex if $\alpha\odot a\oplus  b\in A$ for all $a, b\in A$ and $\alpha\in\R_{\max}$ with $\alpha\le 0$. It is easy to check that $A$  is   max-plus convex iff $\oplus_{i=1}^n\lambda_i\odot\delta_{x_i}\in A$ for all $x_1,\dots, x_n\in A$ and $\lambda_1,\dots,\lambda_n\in\R_{\max}$ such that $\oplus_{i=1}^n\lambda_i=0$. In the following by max-plus convex compactum we mean a max-plus convex compact subset of $\R^\tau$.

We denote by $\odot:\R\times C(X)\to C(X)$ the map acting by $(\lambda,\varphi)\mapsto \lambda_X+\varphi$, and by $\oplus:C(X)\times C(X)\to C(X)$ the map acting by $(\psi,\varphi)\mapsto \max\{\psi,\varphi\}$.

\begin{df}\cite{Zar} A functional $\mu: C(X) \to \R$ is called an idempotent  measure (a Maslov measure) if

\begin{enumerate}
\item $\mu(1_X)=1$;
\item $\mu(\lambda\odot\varphi)=\lambda\odot\mu(\varphi)$ for each $\lambda\in\R$ and $\varphi\in C(X)$;
\item $\mu(\psi\oplus\varphi)=\mu(\psi)\oplus\mu(\varphi)$ for each $\psi$, $\varphi\in C(X)$.
\end{enumerate}

\end{df}

Let us remark that each idempotent measure $\mu: C(X) \to \R$ is a nonexpanding map (a Lipszicz map with constant $1$) with respect  to the sup-metric on the space  $C(X)$ and the natural metric on $\R$  \cite{Zar}.

Let $IX$ denote the set of all idempotent  measures on a compactum $X$. We consider
$IX$ as a subspace of $\R^{C(X)}$. It is shown in \cite{Zar} that $IX$ is a compact max-plus convex subset of $\R^{C(X)}$. The construction $I$ is  functorial what means that for each continuous map $f:X\to Y$ we can consider a continuous map $If:IX\to IY$ defined as follows $If(\mu)(\psi)=\mu(\psi\circ f)$ for $\mu\in IX$ and $\psi\in C(Y)$. It is proved in \cite{Zar} that the functor $I$ preserves topological embedding. For an embedding $i:A\to X$ we shall identify the space $F(A)$ and the subspace $F(i)(F(A))\subset F(X)$. Since the functor $I$ preserves intersections, we can define the notion of support of a measure $\mu\in IX$ as follows $\supp\mu=\cap\{A\subset X\mid A$ is closed and $\mu\in IA\}$ \cite{Zar}.

By $\delta_{x}$ we denote the Dirac measure supported by the point $x\in X$. We can consider a map $\delta X:X\to IX$ defined as $\delta X(x)=\delta_{x}$, $x\in X$. The map $\delta X$ is continuous, moreover it is an embedding \cite{Zar}. It is also shown in \cite{Zar} that the set $$I_\omega X=\{\oplus_{i=1}^n\lambda_i\odot\delta_{x_i}\mid\lambda_i\in\R_{\max},\ i\in\{1,\dots,n\},\ \oplus_{i=1}^n\lambda_i=0,\ x_i\in X,\ n\in\N\},$$ (i.e., the set of idempotent probability measures of finite support) is dense in $IX$.

The notion of density for an idempotent measure was introduced in \cite{A}. Let $\mu\in IX$. Then we can define a function $\rho_\mu:X\to [-\infty,0]$ by the formula $\rho_\mu(x)=\inf\{\mu(\varphi)|\varphi\in C(X)$ such that $\varphi\le 0$ and $\varphi(x)=0\}$, $x\in X$. The function $\rho_\mu$ is upper semicontinuous and is called the density of $\mu$. Conversely, each upper semicontinuous function $f:X\to [-\infty,0]$ with $\max f = 0$ determines an idempotent measure $\nu_f$
by the formula $\nu_f(\varphi) = \max\{f(x)\odot\varphi(x) | x \in X\}$, for $\varphi\in C(X)$ (existence of max follows from the upper semicontinuity of the function $f$). It is easy to check $\supp\mu=\Cl\{x\in X\mid \rho_\mu(x)>-\infty\}$.

Let $A\subset  \R^T$ be a compact max-plus convex subset. For each $t\in T$ we put $f_t=\pr_t|_A:A\to \R$ where $\pr_t:\R^T\to\R$ is the natural projection.    Given $\mu\in A$, the point $\beta_A(\mu)\in\R^T$ is defined by the conditions $\pr_t(\beta_A(\mu))=\mu(f_t)$ for each $t\in T$. It is shown in \cite{Zar} that $\beta_A(\mu)\in  A$ for each $\mu\in I(A)$ and the map $\beta_A : I(A)\to A$ is continuous.
The map $\beta_A$ is called the idempotent barycenter map.

A map $f:X\to Y$ between max-plus convex compacta $X$ and $Y$ is called max-plus affine if for each  $a, b\in X$ and $\alpha\in[-\infty,0]$ we have $f(\alpha\odot a\oplus  b)=\alpha\odot f(a)\oplus  f(b)$.  The map $b_X$  is max-plus affine for each max-plus convex compactum $X$ (Corollary 4.2 in \cite{Radul1}).

\section{A metric on the space of idempotent measures}

 Let $(X,d)$ be a metric compactum. By
$Lip_n(X,d)$ we denote the set of Lipschitz functions on $(X,d)$ with the
Lipschitz constant  $\le n$ for $n\in\N$. For $\nu$, $\mu\in IX$ and $n\in\N$ put $$d_n(\nu,\mu)=\sup\{\frac{|\nu(\varphi)-\mu(\varphi)|}{n}\mid \varphi\in Lip_n(X,d)\},$$
and
$$d_I(\nu,\mu)=\sum_{n\in\N}\frac{d_n(\nu,\mu)}{2^{n}}.$$

It is proved in \cite{Zar} that the metric $d_I$  generates the topology of $IX$, moreover the map   $\delta X:(X,d)\to (IX,d_I)$ is an isometric embedding.

For a metric space $(X,d)$ and $\varepsilon>0$ a map $h:X\to X$ is called $\varepsilon$-close to identity, if  $d(x,h(x))<\varepsilon$ for each $x\in X$.

\begin{lemma}\label{bt} Let $(X,d)$ be a metric compactum and a map $h:X\to X$ is $\varepsilon$-close to identity for some $\varepsilon>0$. Then the map $Ih:IX\to IX$ is $\varepsilon$-close to identity with respect to the metric $d_I$.
\end{lemma}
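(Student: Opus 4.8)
The plan is to exploit two facts already recorded in the excerpt: the explicit formula $Ih(\mu)(\varphi)=\mu(\varphi\circ h)$ defining the functorial action, and the observation that every idempotent measure $\mu\colon C(X)\to\R$ is nonexpanding with respect to the sup-metric on $C(X)$. Fix $\mu\in IX$. For the metric $d_I$ it suffices to control each coordinate $d_n(Ih(\mu),\mu)$, so I would take an arbitrary $\varphi\in Lip_n(X,d)$ and estimate $|Ih(\mu)(\varphi)-\mu(\varphi)|=|\mu(\varphi\circ h)-\mu(\varphi)|$. By the nonexpanding property this is bounded by $\sup_{x\in X}|\varphi(h(x))-\varphi(x)|$, and since $\varphi$ has Lipschitz constant at most $n$ this in turn is at most $n\cdot\sup_{x\in X}d(h(x),x)$.

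Carrying this through, for every $\varphi\in Lip_n(X,d)$ one gets
\[
\frac{|Ih(\mu)(\varphi)-\mu(\varphi)|}{n}\le \sup_{x\in X}d(h(x),x),
\]
so that $d_n(Ih(\mu),\mu)\le \sup_{x\in X}d(h(x),x)$ uniformly in $n$. Summing over $n$ and using $\sum_{n\in\N}2^{-n}=1$ then yields $d_I(Ih(\mu),\mu)\le \sup_{x\in X}d(h(x),x)$, a bound independent of the chosen measure $\mu$.

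The one point requiring care is the passage from the nonstrict bound to the strict inequality $d(\mu,Ih(\mu))<\varepsilon$ demanded by the conclusion. The hypothesis gives only $d(x,h(x))<\varepsilon$ pointwise, and taking a supremum could in principle restore equality. I would resolve this via compactness: since $h$ is continuous (as required for $Ih$ to be defined) the function $x\mapsto d(x,h(x))$ is continuous on the compactum $X$ and attains its maximum, which is therefore some $\varepsilon'<\varepsilon$. Substituting $\varepsilon'$ for the supremum in the estimates above gives $d_I(Ih(\mu),\mu)\le\varepsilon'<\varepsilon$ for every $\mu\in IX$, i.e.\ $Ih$ is $\varepsilon$-close to the identity. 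Apart from this compactness observation the argument is a direct chain of the Lipschitz and nonexpanding estimates, so I expect no serious obstacle.
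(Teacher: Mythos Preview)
Your argument is correct and follows essentially the same route as the paper: use $Ih(\mu)(\varphi)=\mu(\varphi\circ h)$, bound $|\mu(\varphi\circ h)-\mu(\varphi)|$ by $\|\varphi\circ h-\varphi\|_\infty$ via the nonexpanding property, and then apply the Lipschitz condition on $\varphi$. The paper stops at the chain of inequalities and declares the lemma proved; your version is in fact slightly more complete, since you explicitly sum over $n$ and invoke compactness (together with the implicit continuity of $h$, needed for $Ih$ to make sense) to upgrade the pointwise strict bound $d(x,h(x))<\varepsilon$ to $\sup_{x\in X}d(x,h(x))=\varepsilon'<\varepsilon$.
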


\begin{proof} Consider any $\mu\in IX$ and $\psi\in Lip_n(X,d)$. The lemma follows from inequalities $$\frac{|Ih(\mu)(\psi)-\mu(\psi)|}{n}\le \frac{|\mu(\psi\circ h)-\mu(\psi)|}{n}\le$$

(because $\mu$ is nonexpanding)

$$\le\frac{\sup_{x\in X}|\psi(h(x))-\psi(x)|}{n}\le\frac{\sup_{x\in X}nd(h(x),x)}{n}.$$
\end{proof}

\section{The main result for compacta $IX$}

In this section we investigate the map  $\beta_{IX}:I^2X\to IX$ for a metrizable compactum  $X$. It was proved in \cite{Radul1} that $\beta_{IX}:I^2X\to IX$ is open for each  compactum  $X$.

For a function $\varphi\in C(X)$ by $\tilde\varphi\in C(IX)$ we denote the function defined by the formula $\tilde\varphi(\nu)=\nu(\varphi)$ for $\nu\in IX$. Remark that the function  $\tilde\varphi\in C(IX)$ is affine. Diagonal product $(\tilde\varphi)_{\varphi\in C(X)}$ embeds $IX$ into $\R^{C(X)}$ as a Max-Plus convex subset. It is easy to  see that the map $\beta_{IX}$ satisfies the equality $\beta_{IX}(\M)(\varphi)=\M(\tilde\varphi)$ for any $\M\in I^2X=I(IX)$ and $\varphi\in C(X)$. Particularly we have $\beta_{IX}\circ I(\delta X)=\id_{IX}$ for each compactum $X$.

Let $X$ be a max-plus convex compactum. Following \cite{GK}  we call a point $x\in X$ an extremal point if for each two points $y,z\in X$ and for each $t\in [-\infty,0]$ the equality $x=t\odot y\oplus z$  implies $x\in\{y,z\}$. The set of extremal points of a max-plus convex compactum $X$ we denote by $E(X)$. It is proved in \cite{Radul1} that  the set $E(X)$ is closed if   the idempotent barycenter map is open.

\begin{lemma}\label{be} We have $E(IX)=\delta X(X)=\{\mu\in IX\mid |\beta_{IX}^{-1}(\mu)|=1\}$ for each compactum $X$.
\end{lemma}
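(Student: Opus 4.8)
The plan is to prove the two equalities $E(IX)=\delta X(X)$ and $\delta X(X)=\{\mu\in IX\mid |\beta_{IX}^{-1}(\mu)|=1\}$ by a chain of inclusions. The easier half is to establish that Dirac measures are extremal and have singleton fibers. If $\delta_x=t\odot\nu\oplus\rho$ for $\nu,\rho\in IX$ and $t\le 0$, I would test this identity against functions $\varphi\in C(X)$; since $\delta_x(\varphi)=\varphi(x)$ and the density $\rho_{\delta_x}$ is supported only at $x$, comparing densities should force one of $\nu,\rho$ to equal $\delta_x$, giving $\delta X(X)\subseteq E(IX)$. For the fiber statement, note $\beta_{IX}\circ I(\delta X)=\id_{IX}$ (stated in the excerpt), so $I(\delta X)(\delta_x)=\delta_{\delta_x}$ is always a preimage of $\delta_x$; the content is that it is the \emph{only} one. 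Here I would use that $\beta_{IX}(\M)(\varphi)=\M(\tilde\varphi)$ and analyze the density $\rho_\M$ on $IX$: if $\beta_{IX}(\M)=\delta_x$, then computing $\M(\tilde\varphi)=\varphi(x)$ for all $\varphi$ should force $\supp\M\subseteq\{\delta_x\}$, i.e. $\M=\delta_{\delta_x}$.

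The reverse inclusions require showing that any non-Dirac measure is neither extremal nor has a singleton fiber. The key tool is the density representation: any $\mu\in IX$ with $|\supp\mu|\ge 2$ can be written as $\mu=t\odot\mu_1\oplus\mu_2$ where $\mu_1,\mu_2$ are genuine idempotent measures obtained by splitting the support (or the density function) of $\mu$ into two pieces. Concretely, if $\rho_\mu$ attains value $0$ at two distinct points, or is positive on a set with at least two points in its closure, I would partition $X$ into two closed sets each meeting $\supp\mu$ and restrict $\rho_\mu$ to each, renormalizing so that the maximum is $0$; the original $\mu$ is then recovered as a max-plus combination of the two resulting measures $\mu_1\ne\mu_2$, witnessing non-extremality. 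This gives $E(IX)\subseteq\delta X(X)$.

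For the fiber inclusion $\{\mu\mid|\beta_{IX}^{-1}(\mu)|=1\}\subseteq\delta X(X)$, I would argue contrapositively: if $\mu$ is not a Dirac measure, I must exhibit a second preimage $\M'\ne\delta_{\delta_x}$ with $\beta_{IX}(\M')=\mu$. The natural candidate uses the decomposition $\mu=t\odot\mu_1\oplus\mu_2$ from the previous paragraph: the measure $\M'=t\odot\delta_{\mu_1}\oplus\delta_{\mu_2}\in I(IX)$ satisfies $\beta_{IX}(\M')(\varphi)=\M'(\tilde\varphi)=t\odot\mu_1(\varphi)\oplus\mu_2(\varphi)=\mu(\varphi)$ by max-plus affineness of $\tilde\varphi$, so $\M'$ is a preimage, and it differs from $\delta_\mu$ since its support in $IX$ is $\{\mu_1,\mu_2\}\ne\{\mu\}$. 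Stringing these together closes the loop of inclusions.

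I expect the main obstacle to be the support-splitting construction in the density formalism: one must verify carefully that the pieces $\mu_1,\mu_2$ obtained by restricting and renormalizing $\rho_\mu$ are honest idempotent measures (upper semicontinuous densities with maximum $0$) and that they recombine \emph{exactly} to $\mu$ rather than to some majorant. The subtlety is that the two pieces must each carry density value $0$ somewhere so that the coefficient structure in $t\odot\mu_1\oplus\mu_2$ faithfully reproduces $\rho_\mu$; handling the case where $\rho_\mu$ attains its maximum at a single point but still has support of size $\ge 2$ (so one piece needs a strictly negative normalizing shift $t<0$) is where the bookkeeping is most delicate and where I would spend the most care.
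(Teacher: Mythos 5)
Your proposal is correct and follows essentially the same route as the paper: the density-splitting of a non-Dirac measure into $a\odot\mu_2\oplus\mu_1$ via a closed cover $X=X_1\cup X_2$ with renormalization constant $a=\max_{x\in X_2}\rho_\mu(x)$, and the resulting two-point subset $\{a\odot\delta_{\mu_2}\oplus\delta_{\mu_1},\,\delta_\mu\}$ of the fiber, are exactly the paper's constructions. The only divergences are organizational and local: the paper arranges the three-link cycle $E(IX)\subset\delta X(X)\subset\{\mu\mid|\beta_{IX}^{-1}(\mu)|=1\}\subset E(IX)$ (so your separate verification that Dirac measures are extremal is redundant), and for the step $|\beta_{IX}^{-1}(\delta_x)|=1$ it simply cites Lemma 2.2 of \cite{Radul4} (which gives $\beta_{IX}^{-1}(I(\{x\}))\subset I^2(\{x\})=\{\delta_{\delta_x}\}$), whereas you sketch a direct density computation on $I(IX)$ --- a workable but under-detailed replacement that would need the uniqueness of densities and a test-function argument to force $\supp\M\subset\{\delta_x\}$.
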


\begin{proof} Consider any $\mu\in E(IX)$. Assume that there exist distinct  $x_1$, $x_2\in X$ such that  $\rho_\mu(x_1)>-\infty$ and $\rho_\mu(x_2)>-\infty$, where $\rho_\mu$ is the density of $\mu$. We can assume $\rho_\mu(x_1)=0$. Represent $X=X_1\cup X_2$, where $X_1$ and $X_2$ are closed subsets of  $X$, such that $x_2\notin X_1\ni x_1$ and $x_1\notin X_2\ni x_2$. Put $a=\max _{x\in X_2}\rho_\mu(x)\le 0$ (existence of max follows from the upper semicontinuity of the function  $\rho_\mu$). Define functions $\rho_1, \rho_2:X\to [-\infty,0]$ as follows

$$\rho_1(x)=\begin{cases}
\rho_\mu(x),&x\in X_1,\\
-\infty,&x\notin X_1\end{cases}$$
and
$$\rho_2(x)=\begin{cases}
\rho_\mu(x)-a,&x\in X_2,\\
-\infty,&x\notin X_2.\end{cases}$$

It is easy to see that the functions $\rho_1$ ≥ $\rho_2$ are upper semicontinuous and $\max _{x\in X}\rho_1(x)=\max _{x\in X}\rho_2(x)=0$. Hence they are densities of some $\mu_1$, $\mu_2\in IX$, moreover $\mu\notin\{\mu_1;\mu_2\}$ and $a\odot\mu_2\oplus\mu_1=\mu$. We obtained a contradiction and $\mu\in \delta X(X)$.

Consider any $x\in X$. Then $\delta_x=I(\{x\})$. Lemma 2.2 from \cite{Radul4} implies $\beta_{IX}^{-1}(I(\{x\}))\subset I^2(\{x\})$. Hence $\beta_{IX}^{-1}(\delta_x)=\beta_{IX}^{-1}(I(\{x\}))\subset I^2(\{x\})=\{\delta_{\delta_x}\}$ and $\delta_x\in\{\mu\in IX\mid |\beta_{IX}^{-1}(\mu)|=1\}$.

Finally, consider any $\mu\notin E(IX)$. There exist $\mu_1$, $\mu_2\in IX$ and $a\in [-\infty,0]$, such that $\mu\notin\{\mu_1;\mu_2\}$ and $a\odot\mu_2\oplus\mu_1=\mu$. Then the set $\beta_{IX}^{-1}(\mu)$ contains two-points subset $\{a\odot\delta_{\mu_2}\oplus\delta_{\mu_1},\delta_\mu\}$ ≥ $\mu\notin\{\nu\in IX\mid |\beta_{IX}^{-1}(\nu)|=1\}$.
\end{proof}

Let us recall some necessary notions and facts. Let  $A$ be a subset of a space $X$. A continuous map $r:X\to A$ is called a retraction, if $r|_A=\id_A$. A space  $A$ is called an absolute retract (shortly AR), if for each closed embedding $j:A\to X$ there exists a retraction $r:X\to j(A)$.

\begin{lemma}\label{ar} Each metrizable max-plus convex compactum $X$ is an absolute retract.
\end{lemma}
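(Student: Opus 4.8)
The plan is to equip $X$ with a continuous equiconnecting structure built directly from the operations $\odot$ and $\oplus$, and then to deduce the absolute retract property from equiconnectedness by a Dugundji--type extension argument. This keeps everything inside the max-plus calculus already developed in the excerpt and avoids circularity (realising $X$ as a retract of $IX$ would only reduce the claim to the same statement for the larger max-plus convex compactum $IX$).

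First I would construct an equiconnecting map $\lambda\colon X\times X\times[0,1]\to X$, i.e. a continuous map satisfying $\lambda(a,b,0)=a$, $\lambda(a,b,1)=b$ and $\lambda(a,a,t)=a$ for all $a,b\in X$ and $t\in[0,1]$. Fix continuous functions $g\colon[0,\tfrac12]\to[-\infty,0]$ and $h\colon[\tfrac12,1]\to[-\infty,0]$ with $g(0)=h(1)=-\infty$ and $g(\tfrac12)=h(\tfrac12)=0$ (for instance $g(t)=\log(2t)$ and $h(t)=\log(2-2t)$, with $\log 0=-\infty$), and set
$$\lambda(a,b,t)=\begin{cases} g(t)\odot b\oplus a,& t\in[0,\tfrac12],\\ h(t)\odot a\oplus b,& t\in[\tfrac12,1].\end{cases}$$
Since $g(t),h(t)\le 0$, max-plus convexity of $X$ guarantees $\lambda(a,b,t)\in X$ in both cases. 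At $t=\tfrac12$ both branches equal $a\oplus b$, so the two pieces agree and $\lambda$ is continuous by the pasting lemma together with the continuity of $\odot$ and $\oplus$ on $\R^\tau$. Using the convention $-\infty\odot x=-\infty$ one checks $\lambda(a,b,0)=a$ and $\lambda(a,b,1)=b$, while for $a=b$ the inequalities $g(t)\odot a\le a$ and $h(t)\odot a\le a$ give $\lambda(a,a,t)=a$. Hence $X$ is a metrizable equiconnected compactum; in particular it is contractible (contract along $\lambda(\cdot\,,x_0,t)$ for a fixed $x_0$) and locally equiconnected.

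Next I would pass from equiconnectedness to the AR property. The cleanest route is the equiconnected version of Dugundji's extension theorem: a metrizable equiconnected compactum is an absolute extensor for metrizable spaces, hence an AR. Concretely, given a closed $A\subset Z$ with $Z$ metrizable and a map $f\colon A\to X$, one extends $f$ over $Z$ by combining the values $f(a_s)$, indexed by a Dugundji system for $Z\setminus A$, into a single point of $X$. The idempotent structure makes such a combination natural: for a locally finite family of weights $\{\varphi_s\}$ normalised by $\oplus_s\varphi_s=0$ one forms the point $\oplus_s \varphi_s\odot f(a_s)=\sup_s(\varphi_s+f(a_s))$, computed coordinatewise.

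The main obstacle is precisely this extension step: one must verify that these infinite max-plus combinations again lie in $X$ and define a continuous extension agreeing with $f$ along $A$. Membership in $X$ should follow because each finite partial combination lies in the max-plus convex set $X$ and $X$ is closed, so the supremum, being a limit of finite combinations, remains in $X$; continuity away from $A$ is routine from local finiteness, whereas continuity at points of $A$ is where the Dugundji choice of base points $a_s$ lying close to $A$ must be exploited carefully. Alternatively, one may avoid a direct extension argument by combining two standard facts for metrizable spaces, namely that a locally equiconnected compactum is an ANR and that a contractible ANR is an AR; the single map $\lambda$ supplies both the local equiconnectedness and the required contraction.
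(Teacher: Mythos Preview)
Your construction of the equiconnecting map $\lambda$ is correct, and either of your two routes to the AR property (the max-plus Dugundji extension, or the standard chain ``equiconnected metrizable $\Rightarrow$ locally equiconnected $\Rightarrow$ ANR'' together with ``contractible ANR $\Rightarrow$ AR'') goes through. So the proposal is valid.

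The paper, however, takes a much shorter path: it quotes the fact that $IX$ is an AR for every metrizable compactum $X$ (established in \cite{Radul4} and \cite{Zar1}), and then simply observes that $\delta_X\circ\beta_X:IX\to\delta_X(X)$ is a retraction, so $X\cong\delta_X(X)$ is a retract of an AR and hence an AR. Your worry about circularity is unfounded: the AR property of $IX$ is proved in those references by methods that do not presuppose the present lemma (in fact, by essentially the equiconnectedness argument you describe, carried out for the specific space $IX$, or by a direct identification with the Hilbert cube). So the paper's argument is not a reduction of the claim to itself but a reduction to an already-known special case. What your approach buys is self-containment: you never leave the paper and you show transparently that max-plus convexity alone, without passing through the functor $I$, already yields the equiconnecting structure. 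What the paper's approach buys is brevity, at the cost of a citation.
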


\begin{proof} It is known that $IX$ is an absolute retract for each metrizable compactum (see for example \cite{Radul4} or \cite{Zar1}). The map $\delta_X\circ\beta_X : IX\to \delta_X(X)$ is a retraction, hence $X$ is an absolute retract.
\end{proof}

A continuous map $f:X\to Y$ is said to be soft \cite{Shchep1} if for any  paracompact space $Z$, any closed subspace $A$ of $Z$ and
maps $\Phi:A\to X$ and $\Psi:Z\to Y$ with $\Psi|A=f\circ\Phi$ there exists a continuous
map $G:Z\to X$ such that $G|A=\Phi$ and $\Psi=f\circ G$. Ii follows from the definition that a restriction of a soft map to a complete preimage is soft.

A continuous map $f:X\to Y$ is  called a trivial bundle with a fiber  $Z$ (or trivial  $Z$-bundle), where $Z$ is a topological space, if there exists a homeomorphism $h:X\to Z\times Y$ such that $f=\pr_Y\circ h$. A continuous map $f:X\to Y$ is  called a locally trivial bundle with a fiber  $Z$ if for each point $y\in Y$ there is a neighborhood $U$ and a homeomorphism $h:f^{-1}(U)\to Z\times U$ such that $f(x)=\pr_Y\circ h(x)$ $x\in X$.

By  $Q$ we denote the Hilbert cube, that is the countable infinite product of unit closed intervals with the product topology. We will use the following version of Torunczyk-West Theorem about topological characterization of  $Q$-bundle, which follows immediately from Proposition 1.6.5 \cite{FedChig}.

\begin{theorem}\label{TW} Let $f:X\to Y$  be a soft map between compact AR-compacta, and $d$ is a metric on $X$. If for each $\varepsilon>0$ there exist two $\varepsilon$-close to identity continuous maps   $h,g:X\to X$ such that $h(X)\cap g(X)=\emptyset$ and $f\circ h=f=f\circ g$, then $f$ is a trivial $Q$-bundle.
\end{theorem}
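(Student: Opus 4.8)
The plan is to deduce the statement directly from the Toru\'nczyk--West characterization of Hilbert-cube bundles quoted as Proposition 1.6.5 in \cite{FedChig}. That characterization tells us, roughly, that a soft map between compact metric absolute retracts is a trivial $Q$-bundle as soon as it enjoys a \emph{fiberwise disjoint approximation property}: any two maps into $X$ that agree after composing with $f$ can be perturbed, keeping them over $Y$, into maps with disjoint images. Softness and the assumption that $X$ and $Y$ are compact $AR$-compacta are handed to us, so the only thing to verify is this disjoint approximation property, and this is exactly where the maps $h$ and $g$ enter.

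First I would record the elementary composition trick that converts the single instance of the hypothesis into the full approximation property. Suppose $Z$ is a compactum and $u,v\colon Z\to X$ are maps with $f\circ u=f\circ v$; fix $\e>0$ and choose $h,g\colon X\to X$ as in the hypothesis for this $\e$. Put $u'=h\circ u$ and $v'=g\circ v$. Then $f\circ u'=(f\circ h)\circ u=f\circ u$ and, likewise, $f\circ v'=f\circ v$, so $u'$ and $v'$ are again maps over $Y$ realizing the same fiber data; since $h(X)\cap g(X)=\emptyset$, their images are disjoint; and because $h$ and $g$ are $\e$-close to the identity we get $d(u'(z),u(z))=d(h(u(z)),u(z))<\e$ and $d(v'(z),v(z))<\e$ for every $z\in Z$. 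Note that the first equality is exact, not merely approximate, precisely because the hypothesis demands $f\circ h=f=f\circ g$ on the nose. Thus an arbitrary pair of maps over $Y$ can be pushed into disjoint position by an $\e$-small perturbation, which is exactly the fiberwise disjoint approximation property required by the characterization.

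With softness, the $AR$ hypotheses, and the fiberwise disjoint approximation property all in hand, Proposition 1.6.5 of \cite{FedChig} then yields that $f$ is a trivial $Q$-bundle, completing the argument.

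I expect the main obstacle to be bookkeeping rather than conceptual: one must check that the form of disjoint approximation quantified in the cited proposition genuinely matches the ``$\id_X$ splits into two disjoint $\e$-close copies over $f$'' formulation of our hypothesis, and that softness between compact $AR$-compacta forces each fiber $f^{-1}(y)$ to be an absolute retract on which the disjoint approximation restricts to Toru\'nczyk's disjoint-cells property, so that every fiber is a genuine Hilbert cube. Once local triviality has been produced by the Toru\'nczyk--West machinery, the assumption that the base $Y$ is an absolute retract upgrades it to a \emph{trivial} bundle; reconciling these compatibility points with the exact wording of Proposition 1.6.5 is the only delicate step.
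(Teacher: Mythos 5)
Your proposal is correct and follows essentially the same route as the paper: the paper offers no written proof at all, simply asserting that the theorem ``follows immediately from Proposition 1.6.5 \cite{FedChig}'', and your composition trick ($u'=h\circ u$, $v'=g\circ v$) is exactly the routine verification of the fiberwise disjoint approximation hypothesis that this assertion leaves implicit. In effect you have supplied slightly more detail than the paper itself does.
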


Put  $J=\{(t,p)\in [-\infty,0]\times [-\infty,0]\mid t\oplus p=0\}$.
Let  $X$ be a   max-plus convex compactum. Define the map $s_X:X\times X\times J\to X$ by the formula $s_X(x,y,t,p)=t\odot x\oplus p\odot y$. It is easy to check that $s_X$ is continuous.

\begin{theorem}\label{MR} Let  $X$  be a metric compactum. The map $b_X=\beta_{IX}|_{I^2X\setminus\beta_{IX}^{-1}(E(IX))} :I^2X\setminus\beta_{IX}^{-1}(E(IX))\to IX\setminus E(IX)$ is a trivial $Q$-bundle.
\end{theorem}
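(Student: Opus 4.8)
The plan is to verify the hypotheses of the Toru\'nczyk--West characterization, Theorem \ref{TW}, for the map $b_X$. Write $T=I^2X\setminus\beta_{IX}^{-1}(E(IX))$ and $B=IX\setminus E(IX)$. By Lemma \ref{ar} both $I^2X$ and $IX$ are metrizable AR-compacta, and $E(IX)=\delta X(X)$ is closed by Lemma \ref{be}; hence $T$ and $B$ are open subsets of Hilbert cubes, and so are separable, locally compact ANR's (indeed $Q$-manifolds). Since Theorem \ref{TW} is stated for compact spaces, the first task is to pass to the appropriate form of the characterization valid in this locally compact $Q$-manifold setting; granting that, it remains to check two things: that $b_X$ is soft, and that for every $\varepsilon>0$ there are two $\varepsilon$-close-to-identity maps $h,g\colon T\to T$ with $h(T)\cap g(T)=\emptyset$ and $b_X\circ h=b_X=b_X\circ g$.

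Softness is the easy half. The map $\beta_{IX}\colon I^2X\to IX$ is open for every compactum by \cite{Radul1}, and for metrizable $X$ one moreover has softness: its fibres are max-plus convex compacta, hence AR by Lemma \ref{ar}, and the equiconnected structure of the functor $I$ produces the fibrewise extensions demanded by the definition. Since $b_X$ is the restriction of $\beta_{IX}$ to the complete preimage $\beta_{IX}^{-1}(B)$ of the open set $B$, and a restriction of a soft map to a complete preimage is soft, $b_X$ is soft.

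For the two maps I would exploit the operation $s$. Because $b_X$ is max-plus affine (Corollary 4.2 of \cite{Radul1}), for any continuous section $\sigma\colon B\to T$ of $b_X$ and any $(t,p)\in J$ the assignment $\M\mapsto t\odot\M\oplus p\odot\sigma(\beta_{IX}(\M))$ keeps the barycentre fixed: its $\beta_{IX}$-image is $t\odot\mu\oplus p\odot\mu=(t\oplus p)\odot\mu=\mu$, where $\mu=\beta_{IX}(\M)$. Taking $t=0$ and letting $p\to-\infty$, this map tends to the identity, and a short $d_I$-estimate (using $\sigma(\mu)(\Psi)-\M(\Psi)\le n\,\mathrm{diam}(IX,d_I)$ for $\Psi\in Lip_n$ together with the $2^{-n}$ weighting) shows the convergence is uniform in $\M$; thus the non-compactness of $T$ causes no difficulty and $\varepsilon$-closeness is attainable. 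I would feed into this two sections that are everywhere distinct on $B$, for instance $\sigma_1(\mu)=\delta_\mu$, the Dirac measure at the point $\mu\in IX$, and $\sigma_2(\mu)=I(\delta X)(\mu)$; these differ exactly because $\mu\notin E(IX)$ (equivalently $|\supp\mu|\ge2$ by Lemma \ref{be}), and their supports in $IX$, namely $\{\mu\}$ and $\delta X(\supp\mu)$, are disjoint.

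The main obstacle is to force $h(T)\cap g(T)=\emptyset$. As $b_X\circ h=b_X=b_X\circ g$, any coincidence $h(\M)=g(\M')$ forces $\beta_{IX}(\M)=\beta_{IX}(\M')$, so disjointness is fibrewise: one needs $h(F_\mu)\cap g(F_\mu)=\emptyset$ for each $\mu\in B$, where $F_\mu=\beta_{IX}^{-1}(\mu)$. Here the naive perturbations above do \emph{not} suffice: when $\M$ already carries density $\ge p$ at $\mu$ one computes $h(\M)=\M$, and for a suitable $\M'\in F_\mu$ also $g(\M')=\M$, so the two images meet. What must save the argument is the room guaranteed by $\mu\notin E(IX)$: since $|\supp\mu|\ge2$ the fibre $F_\mu$ is a nondegenerate max-plus convex compactum, and (as $X$ is infinite) there are infinitely many independent directions along which to transport mass. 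The construction I would pursue therefore does not merely add a canonical atom but genuinely reshuffles $\M$ --- transporting mass by a near-identity self-map of $X$ chosen along a convergent sequence and then correcting the barycentre back to $\mu$ by means of $s$ --- so that $h$ and $g$ imprint two different, continuously varying markers, read off from the densities on the disjoint support loci of $\sigma_1$ and $\sigma_2$, that no single point of a fibre can carry at once. Proving that this reshuffling is jointly continuous on $T$, stays $\varepsilon$-close to the identity, preserves the barycentre, and yields genuinely disjoint images is, I expect, the crux of the whole theorem; by comparison the softness step and the $Q$-manifold/ANR bookkeeping are routine.
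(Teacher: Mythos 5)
Your skeleton is the right one---Toru\'nczyk--West, softness of $b_X$ via restriction of the soft map $\beta_{IX}$, and two $\varepsilon$-close-to-identity fibre-preserving maps with disjoint images---and your map built from the section $\sigma_2(\mu)=I(\delta X)(\mu)$ is essentially the paper's $h(\M)=s_{I^2X}(\M,I(\delta X)(\beta_{IX}(\M)),0,t)$. But the proposal has two genuine gaps. The smaller one is the non-compactness of $I^2X\setminus\beta_{IX}^{-1}(E(IX))$: you ask the reader to ``grant'' a locally compact version of Theorem \ref{TW}, whereas the paper never needs one. It instead proves \emph{local} triviality: around each $\nu\in IX\setminus E(IX)$ it takes a compact max-plus convex neighbourhood $K$ with $K\cap E(IX)=\emptyset$, notes that $\beta_{IX}^{-1}(K)$ and $K$ are ARs by affinity of $\beta_{IX}$ and Lemma \ref{ar}, applies Theorem \ref{TW} to the restriction $b_K$, and then invokes the Toru\'nczyk--West result that a locally trivial $Q$-bundle is trivial. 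That localization is also what makes the disjointness argument work, because it guarantees the reference point $\kappa=\beta_{IX}(\M)$ stays in a compactum disjoint from $E(IX)$.

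The larger gap is the second map, which you explicitly leave unconstructed after (correctly) observing that the naive perturbation $\M\mapsto\M\oplus t\odot\delta_{\beta_{IX}(\M)}$ fails. The paper's resolution is not a further ``reshuffling by a near-identity self-map of $X$ with barycentre correction''; it is the cleaner device $g(\M)=I(h_{\beta_{IX}(\M)})(\M)$, the pushforward of $\M$ under the self-map $h_\kappa:\nu\mapsto\nu\oplus t\odot\kappa$ of $IX$ with $\kappa=\beta_{IX}(\M)\in K$. Every point of $\supp(g(\M))$ then has the form $\nu\oplus t\odot\kappa$ with $t>-\infty$ and $\kappa\notin E(IX)$, hence has at least two points of finite density and lies outside $E(IX)=\delta X(X)$; by contrast $\supp(h(\M))$ always meets $E(IX)$. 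So the disjointness marker is exactly ``support meets $E(IX)$ or not,'' which you gesture at but do not pin down. Moreover, barycentre preservation for this $g$ is not covered by your affine-combination computation (that only handles maps of the form $t\odot\M\oplus p\odot\sigma(\beta_{IX}(\M))$); it requires the separate estimate $\tilde\varphi\le\tilde\varphi\circ h_{\beta_{IX}(\M)}\le\M(\tilde\varphi)_{IX}\oplus\tilde\varphi$ together with the order-preservation and constant-shift properties of idempotent measures to conclude $\M(\tilde\varphi\circ h_{\beta_{IX}(\M)})=\M(\tilde\varphi)$. Since you yourself flag this construction as ``the crux of the whole theorem'' and do not supply it, the proposal as written does not prove the statement.
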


\begin{proof} Let  $(X,d)$  be a metric compactum. We consider on the spaces $IX$ and $I^2X$ the metrics $d_I$ та $(d_I)_I$ introduced in Section 3.  Since each locally trivial $Q$-bundle is trivial \cite{TW}, it is enough to prove that the map $b_X$ is a locally trivial $Q$-bundle. Consider any point $\nu\in IX\setminus E(IX)$. Choose a  max-plus convex compact neighborhood  $K$ of the point $\nu$ such that $K\cap E(IX)=\emptyset$. It follows from the affinity of the map $\beta_{IX}$ that the compactum $\beta_{IX}^{-1}(K)$ is max-plus convex. Hence the compacta  $\beta_X^{-1}(K)$ and $K$ are absolute retracts according to Lemma \ref{ar}.

By  $b_K$ we denote the restriction of the map $\beta_{IX}$ onto $\beta_{IX}^{-1}(K)$.  It is enough to prove that the map $b_K$ satisfies the condition of Theorem \ref{TW}. Since the map $\beta_{IX} : I^2X\to IX$ is open and   $X$  is a metric compactum,  $\beta_{IX}$ is soft (Corollary 4.2 in  \cite{Radul4}). Then the map $b_K$ is soft too being a restriction of  $\beta_{IX}$ to complete preimage.

Consider any $\varepsilon>0$.   Since the map $s_{IX}$ is continuous and $s_{IX}(\mu,\nu,0,-\infty)=\mu$ for each $\mu$, $\nu\in IX$,  there is $t\in (-\infty,0]$ such that for each $\kappa\in K$ the continuous map  $h_\kappa:IX\to IX$ defined by the formula $h_\kappa(\mu)=s_{IX}(\mu,\kappa,0,t)$ is $\varepsilon$-close to identity. Let us remark that the set  $S=\cup_{\kappa\in K}h_\kappa(IX)=s_{IX}(IX,K,0,t)$ is compact and $S\cap E(IX)=\emptyset$.

Define the map $g:\beta_{IX}^{-1}(K)\to I^2X$ by the formula $g(\M)=I(h_{\beta_{IX}(\M)})(\M)$. It is easy to check that $g$ is continuous and Lemma \ref{bt} implies that $g$ is $\varepsilon$-close to identity.

Consider any function $\varphi\in C(X)$ and $\M\in\beta_{IX}^{-1}(K)$. Then we have $\beta_{IX}\circ g(\M)(\varphi)=\beta_{IX}\circ I(h_{\beta_{IX}(\M)})(\M)(\varphi)=\M(\tilde\varphi \circ h_{\beta_{IX}(\M)}).$
Denote $\Psi=\tilde\varphi \circ h_{\beta_{IX}(\M)}$. Choose any $\nu\in IX$. Then we have $\Psi(\nu)=\tilde\varphi \circ h_{\beta_{IX}(\M)}(\nu)=\tilde\varphi(t\odot \beta_{IX}(\M)\oplus \nu)=$ (since $\tilde\varphi:IX\to\R$ is affine) $=t\odot \M(\tilde\varphi)\oplus \nu(\varphi)\le\M(\tilde\varphi)\oplus \nu(\varphi)$. Thus  $\tilde\varphi\le\Psi\le\M(\tilde\varphi)_{IX}\oplus\tilde\varphi$. The properties of idempotent measure imply $\M(\tilde\varphi)=\M(\Psi)$ for each $\varphi\in C(X)$ and $\M\in\beta_{IX}^{-1}(K)$, hence  $\beta_{IX}\circ g=\beta_{IX}$. Particularly we have $g(\beta_{IX}^{-1}(K))\subset K$.
Additionally we have $\supp(g(\M))\cap E(IX)=\emptyset$ for each $\M\in\beta_{IX}^{-1}(K)$.

Since the map $s_{I^2X}$ is continuous and $s_{I^2X}(\M,\K,0,-\infty)=\M$ for each $\M$, $\K\in I^2X$,  there is $t\in (-\infty,0]$, such that the continuous map $h:\beta_{IX}^{-1}(K)\to \beta_{IX}^{-1}(K)$ defined by the formula $h(\M)=s_{I^2X}(\M,I(\delta X)(\beta_{IX}(\M)),0,t)$ is $\varepsilon$-close to identity. Since the map $\beta_{IX}$ is affine and $\beta_{IX}\circ I(\delta X)=\id_{IX}$, we have $\beta_{IX}\circ h=\beta_{IX}$. Since $\delta X(X)=E(IX)$, we have $I(\delta X)(\beta_{IX}(\M))\in I(E(IX))$. Hence $\supp(h(\M))\cap E(IX)\neq\emptyset$ for each $\M\in\beta_{IX}^{-1}(K)$. We obtained that the maps $g$ and $h$ satisfy the conditions of Theorem \ref{TW}.
\end{proof}

\section{The general case: a partial result and a general problem}

In this section we investigate topological structure of the idempotent barycenter map  for arbitrary metric  max-plus convex compactum. Let us remark that for probability measures  the set of extremal points coincides with the set of points with one-point fibers of the barycenter map \cite{Fed}. The situation generally is different in the case of idempotent measures. It is easy to see that the map  $\beta_{[0,1]}: I([0,1])\to [0,1]$ has no points with one-point fibers although the max-plus convex compactum $[0,1]$ has two extremal points $0$ and $1$.

For a max-plus convex compactum $X$ denote by $B(X)$ the set of points with one-point fibers of the map $\beta_X$.

\begin{lemma}\label{bb} Let $X$ be  a max-plus convex compactum.  Then $B(X)=\{x\in X\mid$ for each two points $y,z\in X$ and for each $\lambda\in (-\infty,0]$ the equality $x=\lambda\odot y\oplus z$ implies $x=y=z\}$.
\end{lemma}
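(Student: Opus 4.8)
\emph{Strategy.} Since $\pr_t(\beta_X(\delta_{x}))=\delta_{x}(f_t)=\pr_t(x)$ for every $t$, the Dirac measure $\delta_{x}$ always lies in $\beta_X^{-1}(x)$, so membership $x\in B(X)$ amounts to $\delta_{x}$ being the \emph{unique} preimage. I will prove the displayed equality by two inclusions, writing $D(X)$ for the set on the right. Two elementary facts will carry the argument. First, straight from the definition of $\beta_X$, for $y,z\in X$ and $\lambda\in[-\infty,0]$ one has $\beta_X(\lambda\odot\delta_{y}\oplus\delta_{z})=\lambda\odot y\oplus z$, since applying $\pr_t$ to either side gives $\max(\lambda+\pr_t(y),\pr_t(z))$. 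Second, combining monotonicity of $\mu$ (immediate from axiom~(3), as $\varphi\le\psi$ yields $\mu(\psi)=\mu(\psi\oplus\varphi)=\mu(\psi)\oplus\mu(\varphi)$) with the homogeneity axiom~(2) and the definition of $\rho_\mu$, I will derive the inequality $\mu(f_t)\ge\rho_\mu(w)+\pr_t(w)$ for all $w\in X$ and all $t$; the test function $\min(f_t-\pr_t(w)_X,\,0_X)$ in the infimum defining $\rho_\mu(w)$ does this at once.

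\emph{The inclusion $B(X)\subseteq D(X)$.} Here I would argue contrapositively. If $x\notin D(X)$, pick witnesses $y,z\in X$ and $\lambda\in(-\infty,0]$ with $x=\lambda\odot y\oplus z$ yet not $x=y=z$. The case $y=z$ is impossible, for then $x=\lambda\odot y\oplus y=y$ (using $\lambda\le0$), forcing $x=y=z$; so $y\neq z$. Set $\nu=\lambda\odot\delta_{y}\oplus\delta_{z}\in IX$. The first fact gives $\beta_X(\nu)=x$, so $\nu\in\beta_X^{-1}(x)$, while a direct computation of the density yields $\rho_\nu(z)=0$, $\rho_\nu(y)=\lambda>-\infty$ and $\rho_\nu\equiv-\infty$ elsewhere, so $\supp\nu=\{y,z\}$ has two points and $\nu\neq\delta_{x}$. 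Then $\delta_{x}$ and $\nu$ are distinct points of the fiber, so $x\notin B(X)$.

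\emph{The inclusion $D(X)\subseteq B(X)$.} Let $x\in D(X)$ and suppose $\beta_X(\mu)=x$; the goal is $\mu=\delta_{x}$. For any $w$ with $c:=\rho_\mu(w)>-\infty$, the second fact gives $\pr_t(x)=\mu(f_t)\ge c+\pr_t(w)$ for every $t$, hence $\pr_t(c\odot w\oplus x)=\max(c+\pr_t(w),\pr_t(x))=\pr_t(x)$, i.e.\ $x=c\odot w\oplus x$. Reading this as the combination $x=\lambda\odot y\oplus z$ with $y=w$, $z=x$, $\lambda=c\in(-\infty,0]$, the defining property of $D(X)$ forces $w=x$. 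Thus $\{w\in X\mid\rho_\mu(w)>-\infty\}\subseteq\{x\}$; as this set is nonempty with $\supp\mu=\Cl\{w\mid\rho_\mu(w)>-\infty\}$, I conclude $\supp\mu=\{x\}$, so $\mu\in I(\{x\})=\{\delta_{x}\}$ and $\mu=\delta_{x}$.

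\emph{The main obstacle.} The one genuinely load-bearing idea is the identity $x=c\odot w\oplus x$ valid at every finite-density point $w$: it is precisely what turns the algebraic membership condition defining $D(X)$ into the topological conclusion $\supp\mu=\{x\}$, and hence $\mu=\delta_{x}$. The remaining ingredients are routine but should be written carefully: the monotonicity inequality $\mu(f_t)\ge\rho_\mu(w)+\pr_t(w)$, the support computation $\supp(\lambda\odot\delta_{y}\oplus\delta_{z})=\{y,z\}$ for $y\neq z$ and finite $\lambda$, the reduction to $y\neq z$, and the identification $I(\{x\})=\{\delta_{x}\}$. I expect the density/support computations to be where care is most needed, since they must be read off correctly from the defining infimum for $\rho$.
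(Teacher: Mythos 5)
Your proof is correct. The inclusion $B(X)\subseteq D(X)$ is handled essentially as in the paper: a nontrivial representation $x=\lambda\odot y\oplus z$ yields the fibre element $\lambda\odot\delta_y\oplus\delta_z$, which is not a Dirac measure (your explicit reduction to $y\ne z$ and the support computation are the right way to make the paper's terse ``$\ne\delta_x$'' precise). For the reverse inclusion your route is genuinely different. The paper argues contrapositively: given $\mu\ne\delta_x$ in the fibre it finds two distinct finite-density points, separates them by a hyperplane $\pr_t=q$ with $\pr_t(x)\ne q$, splits $X$ into the closed max-plus convex pieces $X_1,X_2$, decomposes $\mu=a\odot\mu_2\oplus\mu_1$ by truncating the density, and pushes this down through the max-plus affine map $\beta_X$ to a representation $x=a\odot y\oplus z$ in which $y\in X_2$, $z\in X_1$ cannot both equal $x$ --- the same splitting device used in Lemma \ref{be}. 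You instead argue directly: the estimate $\mu(f_t)\ge\rho_\mu(w)+\pr_t(w)$ (correctly derived from monotonicity, homogeneity and the test function $\min(f_t-\pr_t(w)_X,0_X)$) gives $x=\rho_\mu(w)\odot w\oplus x$ at every finite-density point $w$, so the defining property of $D(X)$ forces $w=x$, hence $\supp\mu=\{x\}$ and $\mu=\delta_x$. This avoids the splitting construction, the choice of separating coordinate, and the affinity of $\beta_X$ entirely, and it localizes all the work in one clean inequality; what it does not produce (and the paper's version does) is an explicit nontrivial max-plus combination witnessing $x\notin D(X)$ from an arbitrary non-Dirac fibre element, which is the form of the statement the paper wants to reuse elsewhere.
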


\begin{proof} Consider any $x\in B(X)$. Assuming that there exist  $t\in (-\infty,0]$ and not equal simultaneously to   $x$  points
 $y$, $z\in X$ not equal simultaneously to   $x$ such that $x=\lambda\odot y\oplus z$, we obtain $\beta_X(\lambda\odot \delta_y\oplus \delta_z)=x$ and $\lambda\odot \delta_y\oplus \delta_z\neq \delta_x$ what contradicts to $x\in B(X)$.

 Now consider $x\notin B(X)$. Since $\delta_x\in \beta_{X}^{-1}(x)$, there is $\mu\in \beta_{X}^{-1}(x)$ and distinct points $x_1$, $x_2\in X$ such that $\rho_\mu(x_1)>-\infty$ ≥ $\rho_\mu(x_2)>-\infty$ where $\rho_\mu$ is the density of $\mu$. We can assume  $\rho_\mu(x_1)=0$.

 Recall that $X$ we consider as max-plus convex compact subset of $\R^T$. There is $t\in T$ such that $\pr_t(x_1)\neq\pr_t(x_2)$. We can assume $\pr_t(x_1)<\pr_t(x_2)$. Choose $q\in (\pr_t(x_1),\pr_t(x_2))$, such that $\pr_t(x)\neq q$.  Represent $X=X_1\cup X_2$ where $X_1=\{y\in X\mid\pr_t(y)\le q\}$ and $X_2=\{y\in X\mid\pr_t(y)\ge q\}$ are closed max-plus convex subsets of the compactum $X$. Put $a=\max _{y\in X_2}\rho_\mu(y)\le 0$ (existence of max follows from upper semicontinuity of the function  $\rho_\mu$). Define functions $\rho_1, \rho_2:X\to [-\infty,0]$ as follows

$$\rho_1(y)=\begin{cases}
\rho_\mu(y),&y\in X_1,\\
-\infty,&y\notin X_1\end{cases}$$
and
$$\rho_2(y)=\begin{cases}
\rho_\mu(y)-a,&y\in X_2,\\
-\infty,&y\notin X_2.\end{cases}$$

It is easy to see that the functions   $\rho_1$ and $\rho_2$ are upper semicontinuous and $\max _{y\in X}\rho_1(y)=\max _{y\in X}\rho_2(y)=0$. Hence they are densities of some $\mu_1$, $\mu_2\in IX$, moreover  $a\odot\mu_2\oplus\mu_1=\mu$. Put $\beta_{X}(\mu_1)=z$ та $\beta_{X}(\mu_2)=y$. Since  $X_1$ and $X_2$ are  max-plus convex compacta,
$z\in X_1$ and $y\in X_2$.  Hence the points
 $y$ and $z$  are not equal simultaneously to  $x$. We also have $x=\beta_{X}(\mu)=\beta_{X}(a\odot\mu_2\oplus\mu_1)=a\odot y\oplus z$.
\end{proof}

Let us remark that Lemma \ref{MR} implies the inclusion $B(X)\subset E(X)$.

\begin{theorem}\label{I} The map $\beta_{[0,1]}$ is trivial $Q$-bundle.
\end{theorem}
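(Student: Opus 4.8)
The plan is to verify the hypotheses of the Toru{n}czyk--West criterion (Theorem \ref{TW}) for the map $f=\beta_{[0,1]}\colon I[0,1]\to[0,1]$, where $I[0,1]$ carries the metric $d_I$ of Section 3. The standing data are immediate: $[0,1]$ is a metrizable max-plus convex compactum, so $I[0,1]$ is a compact $AR$ by Lemma \ref{ar}, while $[0,1]$ is itself a compact $AR$; and the map $\beta_{[0,1]}$ is soft (it is open, which for the segment is readily checked, and the base is metrizable, so softness follows as in Corollary 4.2 of \cite{Radul4}). Writing $\rho_\mu$ for the density of $\mu$ and $\id$ for the coordinate function of $[0,1]\subset\R$, one has $\beta_{[0,1]}(\mu)=\mu(\id)=\max_x(\rho_\mu(x)+x)$, so the fibre over $c$ consists of the $\mu$ with $\rho_\mu(x)+x\le c$ everywhere and equality somewhere; by Lemma \ref{bb} and the discussion opening this section every such fibre is non-degenerate, which is why no points have to be removed.

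It then remains, given $\e>0$, to produce two $\e$-close-to-identity maps $g,h\colon I[0,1]\to I[0,1]$ with $\beta_{[0,1]}\circ g=\beta_{[0,1]}=\beta_{[0,1]}\circ h$ and disjoint images. For $h$ I would take $h(\mu)=\mu\oplus(t\odot\delta_0)$ with a fixed $t<0$: since the extremal point $0$ contributes $\rho_\mu(0)+0\le0\le\beta_{[0,1]}(\mu)$ to the barycentre, $h$ preserves $\beta_{[0,1]}$; it is $\e$-close to the identity once $t$ is sufficiently negative (as $s_{I[0,1]}(\mu,\delta_0,0,-\infty)=\mu$ and $s_{I[0,1]}$ is continuous); and $0\in\supp h(\mu)$ always. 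For $g$ I would use the max-plus clamp $\phi_\ell=\id\oplus\ell_{[0,1]}=\max\{\id,\ell\}$, which is max-plus affine, and set $g(\mu)=I(\phi_{\ell(\mu)})(\mu)$ with $\ell(\mu)=\min\{\e,\beta_{[0,1]}(\mu)\}$. The measure properties give $\beta_{[0,1]}(g(\mu))=\mu(\phi_{\ell(\mu)})=\mu(\id)\oplus\ell(\mu)=\beta_{[0,1]}(\mu)$, since $\ell(\mu)\le\mu(\id)$; as $\phi_\ell$ is $\ell$-close to the identity, Lemma \ref{bt} makes $g$ $\e$-close to the identity; and $\supp g(\mu)\subset[\ell(\mu),1]$. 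Hence for every $\mu$ with $\beta_{[0,1]}(\mu)>0$ one has $0\notin\supp g(\mu)$ while $0\in\supp h(\mu)$, and since both maps preserve the barycentre this forces $g(\mu)\neq h(\nu)$ whenever $\beta_{[0,1]}(\mu)=\beta_{[0,1]}(\nu)>0$.

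The hard part will be the degenerate fibre over the endpoint $0$ (and, symmetrically, over $1$). Over $c=0$ one is forced to have $\rho_\mu(0)=0$ and $\rho_\mu(x)\le-x$, so the clamp parameter $\ell(\mu)=0$ makes $g$ the identity there, while $h$ is also the identity (as $t<\rho_\mu(0)=0$); the support dichotomy collapses and $g,h$ coincide on $\beta_{[0,1]}^{-1}(0)$. Since any barycentre-preserving self-map carries this fibre into itself, disjointness there can only come from moving the \emph{free} part of the density, namely $\rho_\mu$ on $(0,1]$: through $\sigma=\rho_\mu+\id$ the fibre is homeomorphic to $\{\sigma\le0\text{ upper semicontinuous}:\sigma(0)=0\}$, a Hilbert cube. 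I therefore expect the decisive refinement to be a symmetric construction using \emph{both} extremal points — combining $\phi_\ell$ with a clamp acting near $1$ and with an added Dirac mass $t\odot\delta_1$ (admissible once $t\le-1$, so that $t+1\le0\le\beta_{[0,1]}(\mu)$) — so as to perturb the free part of the density in two disjoint, barycentre-preserving, $\e$-small ways that stay continuous and nontrivial across the pinned fibres over $0$ and $1$. Reconciling this symmetric modification with the interior construction uniformly over $[0,1]$ is the main obstacle; once it is in place, Theorem \ref{TW} applies and exhibits $\beta_{[0,1]}$ as a trivial $Q$-bundle.
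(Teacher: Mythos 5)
Your overall strategy---verifying Theorem \ref{TW} for $\beta_{[0,1]}$ with two barycenter-preserving, $\e$-close-to-identity self-maps distinguished by whether an endpoint lies in the support---is the right one, and your maps $h(\mu)=\mu\oplus t\odot\delta_0$ and $g(\mu)=I(\phi_{\ell(\mu)})(\mu)$ do separate images over every fiber with positive barycenter. But the proof is not complete, and the gap is exactly the one you flag yourself: on the fiber $\beta_{[0,1]}^{-1}(0)$ one has $\rho_\mu(0)=0$, so $\ell(\mu)=0$ makes $g$ the identity there and $\mu\oplus t\odot\delta_0=\mu$ makes $h$ the identity there; hence $g(I([0,1]))\cap h(I([0,1]))\supset\beta_{[0,1]}^{-1}(0)\neq\emptyset$ and the disjointness hypothesis of Theorem \ref{TW} fails. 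The final paragraph only gestures at a ``symmetric construction using both extremal points'' without producing it, and that is where the real work sits: a single global pair of maps must separate images over the two pinned fibers over $0$ and $1$ simultaneously, while every device you introduce (clamp at $\ell$, Dirac mass at an endpoint) degenerates on one of them.

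The paper sidesteps this by not working globally at all: it covers the base by $[0,\frac23]$ and $[\frac13,1]$, shows each restriction $b_0$, $b_1$ is a trivial $Q$-bundle, and then invokes the fact that a locally trivial $Q$-bundle is trivial \cite{TW}. The point of the localization is that over $[0,\frac23]$ the barycenter stays at distance at least $\frac13$ from the endpoint $1$, so that \emph{far} endpoint can serve as the support marker uniformly on the whole piece, including the fiber over $0$: the map $h_0(\mu)=\mu\oplus t\odot\delta_1$ with $t\le-1$ preserves the barycenter (since $t+1\le 0\le\beta_{[0,1]}(\mu)$) and forces $1\in\supp h_0(\mu)$, while a truncation of the density at a level $\delta$ close to $1$ (the map $l(\nu,\delta)$ of the paper) preserves the barycenter because $\beta_{[0,1]}(\nu)\le\frac23<\delta$ and removes $1$ from the support. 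The dichotomy $1\in\supp$ versus $1\notin\supp$ therefore holds on all of $\beta_{[0,1]}^{-1}([0,\frac23])$ with no exceptional fiber, and symmetrically over $[\frac13,1]$. To repair your argument you would either have to build such a truncation near the opposite endpoint into a single global $g$ --- which is essentially harder than what the paper does --- or simply localize as above.
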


\begin{proof} We consider the natural metric $d$ on  $[0,1]$ and the metric $d_I$ introduced in Section 3 on $I([0,1])$.

It is enough to prove that the map  $\beta_{[0,1]}$ is locally trivial  $Q$-bundle. We denote by $b_0$ the restriction of the map $\beta_{[0,1]}$ to $\beta_{[0,1]}^{-1}([0,\frac{2}{3}])$ and by  $b_1$ we denote  the restriction of the map $\beta_{[0,1]}$ to $\beta_{[0,1]}^{-1}([\frac{1}{3},1])$. We will prove that the maps $b_0$ and  $b_1$ are trivial $Q$-bundle.  Since the map  $\beta_{IX}$ is affine, Lemma \ref{ar} implies that the compacta $\beta_{[0,1]}^{-1}([\frac{1}{3},1])$ and $\beta_{[0,1]}^{-1}([0,\frac{2}{3}])$ are absolute retracts.

Since the map  $\beta_{[0,1]}$ is open \cite{Radul1},  $\beta_{[0,1]}$ is soft (Corollary 4.2 in  \cite{Radul4}). The the maps $b_0$ and $b_1$ are soft too  being restrictions of $\beta_{[0,1]}$ to complete preimages.

 Consider any $\varepsilon>0$.   Since the map $s_{I([0,1])}$ is continuous and $s_{I([0,1])}(\mu,\nu,0,-\infty)=\mu$ for each  $\mu$, $\nu\in IX$,  there is $t\in (-\infty,-1]$ such that the continuous map $h_0:\beta_{[0,1]}^{-1}([0,\frac{2}{3}])\to \beta_{[0,1]}^{-1}([0,\frac{2}{3}])$ defined by the formula $h_0(\mu)=s_{IX}(\mu,\delta_1,0,t)$ is $\varepsilon$-close to identity. Since $t\leq-1$ we have $b_0\circ h_0=b_0$.  We have as well  $1\in\supp h_0(\mu)$ for each $\mu\in \beta_{[0,1]}^{-1}([0,\frac{2}{3}])$.

 We are going to define the function   $l:\beta_{[0,1]}^{-1}([0,\frac{2}{3}])\times [\frac{2}{3},1]\to \beta_{[0,1]}^{-1}([0,\frac{2}{3}])$ as follows. Consider any $\nu\in \beta_{[0,1]}^{-1}([0,\frac{2}{3}])$ with the density $\rho_\nu$ and $p\in [\frac{2}{3},1]$. We define the function $\sigma:[0,1]\to[-\infty,0]$ by the formula
 $$\sigma(t)=\begin{cases}
\rho_\nu(t),&t<p,\\
\max_{s\in[p,1]}(1-s)\odot\rho_\nu(s),&t=p,\\
-\infty,&t>p.\end{cases}$$
It is easy to see that the function $\sigma$ is well defined, upper continuous and   $\max_{s\in[0,1]}\sigma(s)=1$. Hence  $\sigma$ is a density for some $\mu\in \beta_{[0,1]}^{-1}([0,\frac{2}{3}])$, moreover $\beta_{[0,1]}(\mu)=\beta_{[0,1]}(\nu)$. Put $l(\nu,p)=\mu$. It is easy to check that the map  $l$ is continuous and $l(\nu,1)=\nu$ for each $\nu\in \beta_{[0,1]}^{-1}([0,\frac{2}{3}])$. Then there exists $\delta\in(\frac{2}{3},1)$ such that the continuous map $g_0:\beta_{[0,1]}^{-1}([0,\frac{2}{3}])\to \beta_{[0,1]}^{-1}([0,\frac{2}{3}])$ defined by the formula $g_0(\nu)=l(\nu,\delta)$ is $\varepsilon$-close to identyty. It is easy to see that  $b_0\circ g_0=b_0$.  We also have  $1\notin\supp g_0(\nu)$ for each $\nu\in \beta_{[0,1]}^{-1}([0,\frac{2}{3}])$.

Thus it follows from Theorem \ref{TW} that the map $b_0$ is trivial $Q$-bundle. Analogous, even simpler, arguments we use for the map $b_1$.
\end{proof}

A positive answer to the next question could be a good generalization of Theorems \ref{MR} and \ref{I}.

\begin{problem}\label{pr} Let  $X$ be a max-plus convex metric compactum, such that the map $\beta_X$ is open. Is the map $\beta_{X}|_{IX\setminus\beta_{X}^{-1}(B(X))} :IX\setminus\beta_{X}^{-1}(B(X))\to X\setminus B(X)$ a trivial $Q$-bundle?
\end{problem}

Let us remark that Theorems \ref{MR} and \ref{I} deal with two polar opposite cases: all extreme points are the points with one-point fibers in Theorem \ref{MR}, instead there are no points with one-point fibers in Theorem \ref{I}. The ideas of the proof of these theorems are rather different. Generally, a  max-plus convex compactum can have extremal points with as well trivial fibers as non-trivial fibers. So, it seems that for proof of a general theorem we should mix the methods of the proofs of both  theorems.

Finally we consider the problem whether it is possible to obtain analogous results for non-metrizable compacta. We consider compacta with the first uncountable weight  $\omega_1$. We should consider bundles with fibers  $[0,1]^{\omega_1}$ (let us remark that  $IX$ is homeomorphic to  $[0,1]^{\omega_1}$ for each openly generated homogeneous with respect by character compactum $X$ \cite{Zar1}). The analogical problem was considered for probability measures (see \cite{Fed2} and \cite{RZ}, where some negative results were obtained). We will see that we have the analogous situation for idempotent measures.

 Consider a max-plus convex compactum $I([0,1]^{\omega_1})$ and the corresponding idempotent barycenter map $\beta_{I([0,1]^{\omega_1})}$.  We have  $B(([0,1]^{\omega_1})=I\delta[0,1]^{\omega_1}([0,1]^{\omega_1})$ by Lemma \ref{be}. It follows from Lemma 2.2 in \cite{Radul4} that the fiber $\beta_{I([0,1]^{\omega_1})}^{-1}(\nu)$ is metrizable for each $\nu\in IA$, where $A$ is any metrizable subset of  $[0,1]^{\omega_1}$. Hence we can only consider an idempotent analogue of the Fedorchuk question stated for probability measures (Question 7.12 in \cite{Fed2}): Is the map $\beta_{I([0,1]^{\omega_1})}$ restricted to complete preimage of some compactum consisting of measures with non-metrizable supports a trivial bundle with the fiber $[0,1]^{\omega_1}$?

The answer to this question is negative and follows from the next proposition which is an idempotent analogue of Proposition 5.1 in \cite{RZ}. Moreover, the proof of our proposition can be obtained by a formal translation of   Proposition 5.1 to the language of the idempotent mathematics with taking into account some categorical properties of the functor of idempotent measures  $I$ investigated in \cite{Zar}.

\begin{proposition} There is a measure with non-metrizable support $\nu\in I([0,1]^{\omega_1})$ such that $\beta_{I([0,1]^{\omega_1})}^{-1}(\nu)$ has a point of countable character.
\end{proposition}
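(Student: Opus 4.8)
The plan is to follow the route announced just before the statement: transcribe the proof of Proposition~5.1 in \cite{RZ} into the idempotent setting through the dictionary in which a probability measure is replaced by an idempotent measure, integration $\int\varphi\,d\nu$ by the evaluation $\nu(\varphi)=\max_{x}(\rho_\nu(x)\odot\varphi(x))$, a convex combination by a max-plus combination $t\odot a\oplus p\odot b$, and the Radon--Nikodym density by the density $\rho_\mu$ of Section~2. Writing $Y=[0,1]^{\omega_1}$, the transcription consumes only those properties of the probability functor that, by \cite{Zar}, the functor $I$ also has: $I$ preserves topological embeddings and intersections, so $\supp\mu$ is well defined and $\beta_{IY}^{-1}(IA)\subset I^2A$ for closed $A\subset Y$ (Lemma~2.2 in \cite{Radul4}); $I$ does not raise weight, so $w(I^2Y)=w(IY)=w(Y)=\omega_1$; the map $\beta_{IY}$ is max-plus affine with $\beta_{IY}(\M)(\varphi)=\M(\tilde\varphi)$ and $\beta_{IY}(\delta_\nu)=\nu$. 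The reason the proposition matters is that it produces the obstruction to triviality: a trivial $[0,1]^{\omega_1}$-bundle has every fibre homeomorphic to $[0,1]^{\omega_1}$, whose points all have character $\omega_1$, so a single fibre point of countable character already refutes triviality.

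For the construction I would fix a countable dense subset $D=\{d_n\mid n\in\N\}$ of $Y$ (it exists since $[0,1]^{\omega_1}$ is separable) together with numbers $c_n\ge 0$ satisfying $c_1=0$ and $c_n\to\infty$, and set $\rho_\nu(d_n)=-c_n$, $\rho_\nu(x)=-\infty$ for $x\notin D$. The condition $c_n\to\infty$ makes $\rho_\nu$ upper semicontinuous with $\max\rho_\nu=0$, so $\rho_\nu$ is the density of a measure $\nu\in IY$, and $\supp\nu=\Cl\{x\mid\rho_\nu(x)>-\infty\}=\Cl D=Y$ is non-metrizable. As the required point I would take the Dirac measure $\delta_\nu\in I^2Y$: it lies in the fibre since $\beta_{IY}(\delta_\nu)=\nu$, and its support is the single point $\{\nu\}$.

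To see that $\delta_\nu$ has countable character in $\beta_{IY}^{-1}(\nu)$, I would analyse which measures can sit in the fibre near $\delta_\nu$. By max-plus affinity of $\beta_{IY}$, an elementary perturbation $a\odot\delta_{\mu}\oplus\delta_\nu$ lies in the fibre iff $a\odot\mu\oplus\nu=\nu$, i.e. $\mu(\varphi)\le\nu(\varphi)-a$ for all $\varphi\in C(Y)$; testing against functions concentrated at a point $x$ forces $\rho_{\mu}(x)\le -a+\rho_\nu(x)$, whence $\rho_\mu(x)=-\infty$ whenever $x\notin D$. Thus every admissible perturbation direction is an idempotent measure whose density is carried by the \emph{countable} set $D$, so it is determined by the countable array $(\rho_\mu(d_n))_n\in[-\infty,0]^{\N}$ and these directions form a separable, essentially second countable family. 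Countably many functions $\Phi_n\in C(IY)$ therefore separate them, and the neighbourhoods $\{\M\mid|\M(\Phi_n)-\Phi_n(\nu)|<\e\}$ cut out a base at $\delta_\nu$ inside the fibre, giving $\delta_\nu$ countable character while $\supp\nu=Y$ remains non-metrizable.

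The main obstacle will be to justify the last paragraph for \emph{arbitrary} elements of the fibre near $\delta_\nu$, not merely for the elementary perturbations $a\odot\delta_{\mu}\oplus\delta_\nu$. In the probability setting the corresponding step rests on linearity of the barycentre, splitting a nearby measure into a part carried near $\nu$ and a remainder; here linearity is unavailable and must be replaced by the idempotent density calculus, showing that $\beta_{IY}(\M)=\nu$ together with closeness of $\M$ to $\delta_\nu$ confines the density $\rho_\M$ to measures whose own densities are carried by $D$. Certifying this confinement --- that the local structure of the fibre at $\delta_\nu$ is governed entirely by the countable set $D$, so that the weight $\omega_1$ of $Y$ never enters --- is precisely what the formal translation has to verify, and it is where the categorical properties of $I$ established in \cite{Zar} carry the argument.
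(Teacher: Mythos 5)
The paper itself offers no written-out argument for this proposition --- it only asserts that the proof is a formal translation of Proposition~5.1 of \cite{RZ} --- so your proposal has to be judged on its own terms, and on those terms it contains a genuine gap that you yourself flag: the countable character of $\delta_\nu$ in the fibre is never actually established. Your verification covers only the elementary perturbations $a\odot\delta_\mu\oplus\delta_\nu$, for which you correctly deduce $a\odot\rho_\mu\le\rho_\nu$ and hence that $\rho_\mu$ is carried by $D$. But a general element $\M$ of the fibre is an idempotent measure on $I([0,1]^{\omega_1})$ whose own support need not be metrizable: for instance, the measure $\M_0$ with density $\rho_{\M_0}(\delta_{d_n})=-c_n$ and $-\infty$ elsewhere satisfies $\beta(\M_0)(\varphi)=\max_n(-c_n+\varphi(d_n))=\nu(\varphi)$, so $\M_0$ lies in the fibre while $\supp\M_0=\Cl\{\delta_{d_n}\}=\delta([0,1]^{\omega_1})$ is non-metrizable. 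Consequently the fibre is not visibly ``governed by the countable set $D$,'' and the assertion that ``countably many functions $\Phi_n$ separate the admissible directions'' is neither proved nor, by itself, sufficient: separating points of a subset of the fibre does not produce a neighbourhood base at $\delta_\nu$ unless one also establishes compactness-plus-separation for the whole relevant piece of the fibre, or argues directly that the proposed countable family of neighbourhoods is a base.

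The ingredient that could close the gap is the quantitative version of your density estimate: if $P=\rho_{\M}$ and $P(\lambda)\ge -c$, then $c\odot\rho_\lambda\le\rho_\nu$, so $\rho_\lambda(d_n)\le c-c_n$ and $\rho_\lambda\equiv-\infty$ off $D$; the set $K_c$ of such $\lambda$ is compact and (because $c_n\to\infty$ forces the data $(\rho_\lambda(d_n))_n$ to vanish uniformly at infinity) embeds into $[-\infty,0]^{\N}$, hence is metrizable. One must then show how the exhaustion $\bigcup_{c}K_c$, whose closure in $I([0,1]^{\omega_1})$ is non-metrizable, nevertheless yields a countable base at the single point $\delta_\nu$ of the fibre --- roughly, that for any neighbourhood $U$ of $\delta_\nu$ there are finitely many of the countably many canonical test functions built from $D$ and the $c_n$ whose associated neighbourhood traps every fibre element in $U$. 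This is exactly the step you defer to ``the formal translation,'' and it is the entire content of the proposition; without it the proposal is a plausible plan (with a correct and probably intended choice of $\nu$ and of the point $\delta_\nu$) rather than a proof.
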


\vspace*{\parindent}

\end{document}